\numberwithin{equation}{section}
\newtheorem{theorem}{Theorem}[section]
\newtheorem{conjecture}[theorem]{Conjecture}
\newtheorem{thm}[theorem]{Theorem}
\newtheorem{cor}[theorem]{Corollary}
\newtheorem{prop}[theorem]{Proposition}
\theoremstyle{definition}
\newtheorem{example}[theorem]{Example}
\theoremstyle{remark}
\newtheorem{rem}[theorem]{Remark}
\begin{document}

\newcommand{\Gal}{{\rm Gal}} 
\newcommand{\Pic}{{\rm Pic}}


\addtolength{\textwidth}{0mm}
\addtolength{\hoffset}{-0mm} 
\addtolength{\textheight}{0mm}
\addtolength{\voffset}{-0mm}

\title{On finiteness of curves with high canonical degree on  a surface}

\author{Ciro Ciliberto, Xavier Roulleau}
\address{Universit\`a degli Studi di Roma Tor Vergata, Via della
  Ricerca Scientifica 1, 00133, Rome,
  Italy.}\email{cilibert@axp.mat.uniroma2.it}
  
  \address{Laboratoire de Math\'ematiques et Applications de lÕuniversit\'e de Poitiers,
  T\'el\'eport 2 -- BP 30179, Boulevard Marie et Pierre Curie, 86962 Futuroscope Chasseneuil Cedex,
  France.}\email{Xavier.Roulleau@math.univ-poitiers.fr}

\begin{abstract} The \emph{canonical degree} of a curve $C$ on a surface $X$ is 
$K_X\cdot C$. Our main result, Theorem \ref {thm:MAIN}, is that on a surface of general type there are only finitely many curves
with negative self--intersection and sufficiently large canonical degree. Our proof strongly relies on results by Miyaoka. We extend our result both to surfaces not of general type and to non--negative curves, and give applications, e.g. to finiteness of negative curves on a general blow--up of $\mathbb P^ 2$ at $n\geq 10$ general points (a result related to  \emph{Nagata's Conjecture}).  We finally discuss a conjecture by Vojta concerning the asymptotic behaviour of the ratio between the canonical degree and the geometric genus of a curve varying on a surface. 
The results in this paper go in the direction of understanding the \emph{bounded negativity} problem. 
\end{abstract}
\maketitle

\section{Introduction.}

Let $C$ be a projective curve on a smooth projective complex surface $X$. By \emph{curve} we mean an irreducible, reduced 1--dimensional scheme. We denote by $g=g(C)$
its geometric genus and by $p=p_{a}(C)$ its arithmetic genus, i.e.
$C^{2}+K\cdot C=2p_{a}-2$, where $K=K_{X}$ is a canonical divisor of
$X$. We set $\delta=\delta(C)=p-g$.
We call a curve \emph{negative} if $C^{2}<0$. The \emph{canonical degree} of $C$ is $k_C=K\cdot C$, often simply denoted by $k$. 
If $g(C)\neq 1$, we  set
\[
\beta_C=\frac{k_C}{g(C)-1},
\]
 often simply denoted by $\beta$. 
For a surface $X$ we  set 
\[a_X=3c_{2}(X)-K_{X}^{2},\] 
often simply  denoted by $a$.  If the Kodaira
dimension $\kappa=\kappa(X)$ is non--negative, one has $a\geq 0$.

The main result of this paper concerns negative curves with high canonical degree:

\begin{theorem}
\label{thm:MAIN} (A) Let $C$  be a negative curve 
not isomorphic to $\mathbb{P}^{1}$ on a surface $X$ with $\kappa\geq0$. 
If $a=0$, there are no rational curves on $X$, i.e. 
either $a\not = 0$ or $g>0$. Moreover 
\begin{equation}\label{eq:1}
k_C\leq3(g-1)+\frac{3}{4}a+\frac{1}{4}\sqrt{9a^{2}+24a(g-1)}.
\end{equation}
Furthermore, if $g>1$ then 
 \begin{equation}\label{eq:3}
\beta_C \leq3+\frac{3}{4}a+\frac{1}{4}\sqrt{9a^{2}+24a}\leq4+\frac{3}{2}a.
\end{equation}
If, in addition, $\beta=3+\epsilon>3$, then
\begin{equation}\label{eq:4}
g\leq1+\frac{3a(\epsilon+1)}{2\epsilon^{2}}.
\end{equation}
(B) Suppose $\kappa(X)=2$. Then for any $\epsilon>0$
there are at most finitely many negative curves $C$ on $X$ such that 
 $k_C\geq(3+\epsilon)(g-1)>0$.
\end{theorem}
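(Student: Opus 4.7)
The plan is to deduce (B) from part (A) via a Hodge-index argument. The hypothesis $k_{C}\geq(3+\epsilon)(g-1)>0$ forces $g\geq 2$ and $\beta_{C}\geq 3+\epsilon>3$; writing $\beta_{C}=3+\epsilon'$ with $\epsilon'\geq\epsilon$ and observing that $(\epsilon'+1)/\epsilon'^{2}$ is decreasing for $\epsilon'>0$, inequality \eqref{eq:4} yields the uniform bound $g\leq 1+\frac{3a(\epsilon+1)}{2\epsilon^{2}}$. Substituting into \eqref{eq:3} bounds $\beta_{C}$, hence $k_{C}=\beta_{C}(g-1)$ is bounded. Since $C^{2}<0$, adjunction $C^{2}=2p_{a}-2-k_{C}$ gives $0\leq p_{a}<(k_{C}+2)/2$, so $p_{a}$ is bounded too. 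Consequently the integer invariants $k_{C}$, $p_{a}$, $C^{2}$, and $g$ each take only finitely many values on the set of curves $C$ satisfying the hypothesis.

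Next I reduce finiteness of such curves to finiteness of their numerical classes in $\NS(X)$. Negativity is essential: two distinct irreducible curves $C\neq C'$ cannot be numerically equivalent, for otherwise $C^{2}=C\cdot C'\geq 0$ would contradict $C^{2}<0$. Hence each realised numerical class contains a unique negative irreducible curve, and it suffices to bound the set of such classes.

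To count the classes I apply Hodge index via Kodaira's lemma. Since $\kappa(X)=2$, $K_{X}$ is big, so $K_{X}\sim_{\QQ}A+E$ with $A$ an ample $\QQ$-divisor and $E$ effective; in particular $A^{2}>0$, and $A^{\perp}\subset\NS(X)\otimes\RR$ is negative definite. Excluding the finitely many curves contained in $\mathrm{Supp}(E)$, we have $C\cdot E\geq 0$, whence $C\cdot A\leq k_{C}$ is bounded, and since $A$ is a $\QQ$-divisor, $C\cdot A$ takes only finitely many rational values. Decomposing $C=\alpha A+D$ with $\alpha=(C\cdot A)/A^{2}$ and $D\in A^{\perp}\otimes\RR$, one finds $D^{2}=C^{2}-\alpha^{2}A^{2}$ bounded, so $D$ lies in a bounded region of the negative definite real space $A^{\perp}\otimes\RR$. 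For each of the finitely many admissible values of $C\cdot A$, the corresponding $D$'s lie in a single coset of the discrete lattice $A^{\perp}\cap\NS(X)$, so only finitely many $D$'s---hence only finitely many classes $[C]$---arise.

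The main subtlety is that $K_{X}$ itself need not be nef when $X$ is not minimal, so Hodge index cannot be applied directly to $K_{X}$; Kodaira's lemma circumvents this by extracting an ample piece $A$ from the big class. Equivalently one could pass to the minimal model $\pi:X\to X_{\min}$ and work with the nef and big class $\pi^{*}K_{X_{\min}}$, whose square is positive and whose orthogonal is negative definite.
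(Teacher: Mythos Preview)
Your argument addresses only part (B), taking (A) as given; the paper establishes (A) from Miyaoka's inequality \eqref{eq:Miyaoka 1bis}, but since you explicitly frame your proof as a deduction of (B) from (A), I restrict my comments to that part.

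Your proof of (B) is correct and shares its opening moves with the paper's: both use (A) to bound $g$, $k_C$ and $p_a$, and both invoke Kodaira's lemma to write the big class $K_X$ (or a multiple) as ample plus effective, thereby bounding the ample degree $A\cdot C$ once the finitely many components of the effective part are set aside. The two arguments diverge at the finiteness step. The paper appeals to Chow--Grothendieck: curves of bounded degree with respect to an ample divisor lie in finitely many components of the Hilbert scheme, and a negative curve, being rigid, is alone in its component. You instead work entirely inside $\NS(X)$: Hodge index makes $A^{\perp}$ negative definite, the bounds on $A\cdot C$ and $C^{2}$ confine the orthogonal projection $D=[C]-\alpha A$ to a bounded region of this negative definite space, and discreteness of the relevant lattice coset yields finitely many classes; negativity then ensures each numerical class carries at most one irreducible curve. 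Your route is more self-contained---it stays within surface intersection theory and the lattice structure of $\NS(X)$, avoiding any appeal to Hilbert or Chow schemes---whereas the paper's argument is the one that would transport to higher-dimensional ambient varieties, where numerical class alone no longer controls the curve.
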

   
From (B) it follows that: 

\begin{cor}
Let $X$ be a surface of general type. There  is a function $B(\epsilon), $ defined for $\epsilon \in ]0,\infty[$ such that for all negative curves $C$ we have 
\[
k_C\leq(3+\epsilon)(g-1)+B(\epsilon) \,\,\, \text{and}\,\,\, -C^{2}\leq(1+\epsilon)(g-1)+B(\epsilon).
\]

\end{cor}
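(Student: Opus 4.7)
Fix $\epsilon>0$ and split the set of negative curves $C$ on $X$ according to whether $g(C)\geq 2$ or $g(C)\leq 1$. For the first group, Theorem \ref{thm:MAIN}(B) says that only finitely many negative $C$ satisfy $k_C>(3+\epsilon)(g-1)$; set
\[
B_1(\epsilon):=\max\bigl\{k_C-(3+\epsilon)(g-1)\;:\;C\text{ in this finite set}\bigr\}
\]
(or $0$ if empty). For the second group, evaluate the RHS of \eqref{eq:1} at $g=0,1$: since $a=a_X$ is a constant of $X$, this yields $k_C\leq M(X)$ for all negative $C\not\simeq \mathbb P^1$ with $g\leq 1$. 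Smooth rational negative curves on a general type surface have bounded self--intersection (finiteness of $(-n)$-curves, via Miyaoka's inequalities), so their $k_C$ is also bounded, by some $M'(X)$. Defining
\[
B(\epsilon):=\max\bigl\{B_1(\epsilon),\;\max(M(X),M'(X))+3+\epsilon\bigr\},
\]
a case split on $g(C)$ then gives $k_C\leq(3+\epsilon)(g-1)+B(\epsilon)$ for every negative curve $C$.

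The second inequality is a formal consequence of the first through adjunction. Writing $C^2+k_C=2p_a(C)-2$ and using $\delta(C)=p_a(C)-g(C)\geq 0$ one gets
\[
-C^2 \;=\; k_C+2-2p_a(C)\;\leq\; k_C+2-2g(C).
\]
Substituting the just--proved bound on $k_C$,
\[
-C^2 \;\leq\; (3+\epsilon)(g-1)+B(\epsilon)+2-2g \;=\; (1+\epsilon)(g-1)+B(\epsilon),
\]
which is the desired estimate with the \emph{same} function $B(\epsilon)$.

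The only genuine point to watch is that Theorem \ref{thm:MAIN}(B) gives finiteness only for $g\geq 2$ (because of the requirement $(3+\epsilon)(g-1)>0$), and Theorem \ref{thm:MAIN}(A) excludes $C\simeq\mathbb P^1$. Hence the remaining cases $g(C)\in\{0,1\}$ and $C\simeq \mathbb P^1$ must be absorbed into the constant $B(\epsilon)$; this is routine since for fixed $X$ the bound from (A) at $g\in\{0,1\}$ depends only on $a_X$, and smooth rational curves on a surface of general type form a bounded family. No further analytic input is needed beyond Theorem \ref{thm:MAIN}.
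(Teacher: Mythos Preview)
Your argument is correct and is essentially the intended elaboration: the paper gives no proof beyond the phrase ``From (B) it follows that'', so the content of the corollary is exactly the finiteness in Theorem~\ref{thm:MAIN}(B) together with absorbing the finitely many exceptional curves (and the low-genus cases) into the constant $B(\epsilon)$, followed by the adjunction computation you wrote for the second inequality.

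One small simplification: you do not need to split off $C\simeq\mathbb P^1$ or invoke an external ``finiteness of $(-n)$-curves'' statement. Miyaoka's inequality \eqref{eq:Miyaoka 1} in Theorem~\ref{thm:(Miyaoka)} applies to \emph{all} curves on $X$ (the hypothesis $C\not\simeq\mathbb P^1$ enters only for \eqref{eq:Miyaoka 1bis}). Taking $\alpha=1$ in \eqref{eq:Miyaoka 1} gives
\[
k_C\leq C^2+6(g-1)+2a< 6(g-1)+2a
\]
for any negative curve, which for $g\in\{0,1\}$ already bounds $k_C$ by a constant depending only on $a=a_X$. This covers both the singular rational/elliptic case and the smooth $\mathbb P^1$ case at once, and lets you drop the appeal to \eqref{eq:1} at $g=0$ (where the square root in \eqref{eq:1} may be imaginary for small $a$) as well as the separate boundedness claim for smooth rational curves.
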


On some smooth quaternionic Shimura surfaces $X$ there are infinitely many totally geodesic curves
(see \cite {Chinburg}). Such a curve $C$ is
also called a \emph{Shimura curve} and satisfies $k_C=4(g-1)$. We thus obtain the following corollary, 
which was one of the main result of
\cite{BR}:

\begin{cor} On a Shimura surface, there exist finitely many (may be none) negative
Shimura curves.
\end{cor}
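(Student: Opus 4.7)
The plan is to recognize the corollary as a direct specialization of Theorem \ref{thm:MAIN}(B). A smooth quaternionic Shimura surface $X$ is a compact quotient of the bidisk $\HH\times\HH$ by a cocompact arithmetic lattice, so $X$ is of general type with $\kappa(X)=2$; this places us within the hypothesis of part (B).

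Next I would translate the defining numerical property $k_C = 4(g-1)$ of a Shimura curve into the form required by (B). For a negative Shimura curve $C$, the adjunction formula gives
\[
C^{2}=2p_{a}-2-k_{C}=2g+2\delta-2-4(g-1)=-2(g-1)+2\delta,
\]
so $C^{2}<0$, combined with $\delta\geq 0$, forces $g\geq 2$. (The cases $g\in\{0,1\}$ produce $C^{2}\geq 0$ and are therefore excluded from consideration.) Consequently, for every negative Shimura curve one has $k_C=4(g-1)>0$ and $\beta_C=4$.

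It then suffices to choose $\epsilon=1$ in Theorem \ref{thm:MAIN}(B): the condition $k_C\geq(3+\epsilon)(g-1)>0$ reads $k_C\geq 4(g-1)>0$, which holds with equality for every negative Shimura curve. Part (B) immediately yields the desired finiteness. There is no serious obstacle: the whole argument is a one-line application of (B) once one verifies that Shimura surfaces satisfy $\kappa=2$ and that Shimura curves saturate the critical threshold $\beta_C=3+\epsilon$ at $\epsilon=1$. The substantive content of the corollary lies entirely in Theorem \ref{thm:MAIN}(B) itself; the miracle is precisely that the Shimura slope $\beta_C=4$ is strictly greater than the critical value $3$ appearing in that theorem, so no sharper input is required.
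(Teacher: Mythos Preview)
Your proposal is correct and matches the paper's intended argument: the paper derives the corollary directly from Theorem~\ref{thm:MAIN}(B), using that Shimura surfaces have $\kappa=2$ and Shimura curves satisfy $k_C=4(g-1)$, which is exactly what you do. Your extra verification that a \emph{negative} Shimura curve must have $g\geq 2$ (so that $(3+\epsilon)(g-1)>0$ indeed holds) is a worthwhile detail that the paper leaves implicit.
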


The proof of Theorem \ref {thm:MAIN}, contained in \S \ref {sec:proof}, strongly relies on a result by 
Miyaoka's (see  \cite[Cor 1.4]{Miyaoka} stated as Theorem \ref {thm:(Miyaoka)} below).  
In particular, the inequality \eqref{eq:1} is very similar to  \cite[formula (3)]{Miyaoka}, which has a slightly lower growth in $g$, but applies only to minimal surfaces.

In \S \ref {sec:nagata} we make an extension of Theorem \ref {thm:MAIN} which works also in the case $\kappa=-\infty$, and we prove a finiteness result for negative curves on a general blow--up of $\mathbb P^ 2$ at $n\geq 10$ general points. This is  a bounded negativity result which is 
reminiscent of the famous \emph{Nagata's Conjecture}, predicting that there is no negative curve on such a surface except for $(-1)$--rational curves. 

In \S \ref {sec:pos}, using again   Miyaoka's result, we prove a boundedness theorem  for non--negative curves of high canonical degree. In \S \ref {sec:voita} we discuss a conjecture by Vojta concerning the asymptotic behaviour of $k_C/(g-1)$ when $C$ varies among all curves on a surface. We introduce an invariant related to Vojta's conjecture and we prove a bound for it.

The results in this paper go in the direction of understanding \emph{bounded negativity} (see \cite {BR}). The \emph{Bounded Negativity Conjecture} (BNC)
predicts that on a surface of general type over $\mathbb{C}$ 
the self--intersection of negative curves is bounded below. Nagata's conjecture, which we mentioned above, is also a sort of bounded negativity assertion.  As a general reference on both bounded negativity and Nagata's conjecture, see \cite{Harbourne}. Also Vojta's conjecture is related to bounded negativity, as we discuss in \S \ref {sec:voita}.
As a consequence of Theorem \ref {thm:MAIN}, we have the following information on negative curves:

\begin{cor}
Suppose BNC fails, so that there
exists a sequence $(C_{n})_{n \in \mathbb N}$ of negative curves of genus $g_{n}$
with $\lim g_{n}=\infty$ and $\lim C_{n}^{2}=-\infty$. Then 
\[
\limsup_n\frac{K\cdot C_{n}}{g_{n}-1}\leq3.
\]
\end{cor}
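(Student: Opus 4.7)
The plan is to derive the statement by a short contradiction argument relying entirely on part (B) of Theorem \ref{thm:MAIN}. Since the \emph{Bounded Negativity Conjecture} is formulated for surfaces of general type, the hypothetical $X$ on which BNC fails has $\kappa(X)=2$, so part (B) is in force. I would suppose toward a contradiction that
\[
\limsup_n \frac{K\cdot C_n}{g_n-1}>3,
\]
and extract $\epsilon>0$ together with an infinite subsequence $(C_{n_k})_k$ satisfying $k_{C_{n_k}}\geq (3+\epsilon)(g_{n_k}-1)$ for every $k$.

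The next step is to verify the positivity and distinctness needed to invoke Theorem \ref{thm:MAIN}(B). Since $g_n\to\infty$, for $k$ large one has $g_{n_k}>1$ and hence $(3+\epsilon)(g_{n_k}-1)>0$. Moreover, since a single curve has a single geometric genus (and a single self-intersection), the divergences $g_n\to\infty$ and $C_n^2\to-\infty$ force the terms of $(C_{n_k})_k$ to be pairwise distinct from some index on. Part (B) of Theorem \ref{thm:MAIN} then yields an immediate contradiction, because it asserts that only finitely many negative curves $C$ on $X$ can satisfy $k_C\geq(3+\epsilon)(g-1)>0$.

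No substantial obstacle is expected: the corollary is essentially a contrapositive rephrasing of Theorem \ref{thm:MAIN}(B) as an asymptotic inequality, and the analytic content has already been absorbed into the Miyaoka-based proof of part (B). The only verification worth making explicit is that the divergence $g_n\to\infty$ prevents the sequence from collapsing onto finitely many curves, so that the finiteness statement in part (B) really does bite against the infinite subsequence produced by the contradiction hypothesis.
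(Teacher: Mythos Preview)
Your argument is correct. The paper does not spell out a proof of this corollary: it is presented in the introduction as an immediate consequence of Theorem~\ref{thm:MAIN}, and your contradiction argument via part~(B) is a clean way to make that deduction precise. The only remark worth adding is that one can also argue directly from inequality~\eqref{eq:1} in part~(A): dividing by $g-1$ and letting $g_n\to\infty$ shows that the excess over $3$ tends to~$0$, giving the limsup bound without passing to a subsequence or invoking finiteness. Both routes are equally short, and since the paper does not choose between them, your approach is entirely in line with what the authors intend.
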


In conclusion, the authors would like to thank B. Harbourne and J. Ro\'e for useful exchanges of ideas about the application to Nagata's Conjecture in \S \ref {sec:nagata}.

\section{The proof of the main theorem} \label{sec:proof}

Our proof relies on the following result by Miyaoka's 
(see  \cite[Cor 1.4]{Miyaoka}):

\begin{thm}
\label{thm:(Miyaoka)} Let $C$ be curve on a surface $X$ with $\kappa\geq 0$.  Then for all $\alpha\in[0,1]$,
we have: 
\begin{equation}
{\alpha^{2}}(C^{2}+3k_C-6g+6)-4\alpha(k_C-3g+3)+2a\geq0.\label{eq:Miyaoka 1}
\end{equation}
Suppose $C$ is not isomorphic to $\mathbb{P}^{1}$ and 
$k_C>3(g-1)$. Then
\begin{equation}
2(k_C-3g+3)^{2}-a(C^{2}+3k_C-6g+6)\leq0.\label{eq:Miyaoka 1bis}
\end{equation}
Suppose  in addition $K^{2}>0$. Then
\begin{equation}
(\frac{c_{2}}{K^{2}}-1)k_C^{2}+(4(g-1)+a)k_C-2(g-1)(3(g-1)+a)\geq(\frac{c_{2}}{K^{2}}-\frac{1}{3})[k_C^{2}-C^{2}K^{2}]\geq0.\label{eq:Miyaoka 2}
\end{equation}
\end{thm}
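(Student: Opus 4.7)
The plan is to derive all three inequalities from an appropriate orbifold Bogomolov--Miyaoka--Yau (BMY) inequality applied to the pair $(X,\alpha \widetilde C)$ with $\alpha\in[0,1]$, where $\widetilde C\to C$ is the normalization and $\alpha$ plays the role of an orbifold weight on $C$. The hypothesis $\kappa(X)\geq 0$ puts us in the setting where orbifold BMY is available. Expanding $(c_1^{\mathrm{orb}})^{2}\leq 3c_2^{\mathrm{orb}}$ in terms of $K^{2}$, $k_C$, $C^{2}$, $c_2(X)$, $g$, and the singular locus of $C$, and then using $a=3c_2(X)-K^{2}$, one reads off the quadratic in $\alpha$ displayed in \eqref{eq:Miyaoka 1}. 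The appearance of the geometric genus $g$ (rather than $p_a$) is precisely accounted for by the contribution of the singularities of $C$ to the orbifold second Chern class; this is where the substantive work of the Miyaoka setup lies.

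To deduce \eqref{eq:Miyaoka 1bis} I would optimise the quadratic $Q(\alpha)=A\alpha^{2}-B\alpha+2a$ over $[0,1]$, with
\[
A=C^{2}+3k_C-6g+6,\qquad B=4(k_C-3g+3).
\]
The arithmetic genus formula $C^{2}+k_C=2p_a-2$ and the hypothesis $C\not\cong \PP^{1}$ (which forces $p_a\geq 1$) give $C^{2}+k_C\geq 0$, and combined with $k_C>3(g-1)$ this yields $A>0$ and places the vertex $\alpha^{\ast}=B/(2A)$ in $(0,1]$, since $\alpha^{\ast}\leq 1\iff C^{2}+k_C\geq 0$. The condition $Q(\alpha^{\ast})\geq 0$ then reads $8aA\geq B^{2}$, which after dividing by $4$ is exactly
\[
2(k_C-3g+3)^{2}-a(C^{2}+3k_C-6g+6)\leq 0,
\]
i.e.\ \eqref{eq:Miyaoka 1bis}.

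For \eqref{eq:Miyaoka 2}, the hypothesis $K^{2}>0$ makes the Hodge index theorem available in the form $C^{2}K^{2}\leq k_C^{2}$, which is the final inequality of the chain. Substituting the bound $C^{2}\leq k_C^{2}/K^{2}$ into \eqref{eq:Miyaoka 1bis} and rewriting $a$ in terms of $c_2/K^{2}$ through the identity $3c_2=K^{2}+a$, then collecting monomials in $k_C$, yields the middle inequality of \eqref{eq:Miyaoka 2}.

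The main technical obstacle is the correct orbifold BMY setup with the singularity contribution to $c_2^{\mathrm{orb}}$---that is the substantive content of \cite{Miyaoka}, and I would simply import it. Once \eqref{eq:Miyaoka 1} is in hand, the deductions of \eqref{eq:Miyaoka 1bis} and \eqref{eq:Miyaoka 2} reduce to a quadratic optimisation and a one-line Hodge index argument, so the hard work really is concentrated in the first step.
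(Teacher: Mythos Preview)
Your approach is essentially the same as the paper's, which simply cites \cite[Thm.~1.3, (i)--(iii)]{Miyaoka} for all three inequalities (noting only that (iii) in fact goes through under the weaker hypothesis $K^{2}>0$). You sketch the internal mechanism of Miyaoka's argument---orbifold BMY for \eqref{eq:Miyaoka 1}, vertex optimisation for \eqref{eq:Miyaoka 1bis}, Hodge index for \eqref{eq:Miyaoka 2}---and your deduction of \eqref{eq:Miyaoka 1bis} from \eqref{eq:Miyaoka 1} is correct (the check that $C\not\cong\PP^{1}$ forces $p_{a}\geq 1$, hence $C^{2}+k_C\geq 0$, is exactly what is needed to place the vertex in $(0,1]$).

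One small inaccuracy: the first inequality in \eqref{eq:Miyaoka 2} is in fact an \emph{algebraic rearrangement} of \eqref{eq:Miyaoka 1bis} using $a=3c_{2}-K^{2}$ (multiply the difference of the two sides by $3$ and you recover \eqref{eq:Miyaoka 1bis} on the nose); no substitution of $C^{2}\leq k_C^{2}/K^{2}$ is required there. Hodge index (together with $a\geq 0$, i.e.\ ordinary BMY) is only needed for the final $\geq 0$.
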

\begin{proof}
Inequality \eqref {eq:Miyaoka 1} is \cite[Thm 1.3, (i)]{Miyaoka} and \eqref {eq:Miyaoka 1bis} is \cite[Thm 1.3, (ii)]{Miyaoka}.
As for \eqref {eq:Miyaoka 2}
this is \cite[Thm 1.3, (iii)]{Miyaoka}, which is stated there under the
assumption that $X$ is minimal of general type and $C\not\simeq\mathbb{P}^{1}$.
However Miyaoka's argument works more generally  under the weaker assumption $K^{2}>0$. 
\end{proof}

We are now ready for the: 
\begin{proof}[Proof of Theorem \ref{thm:MAIN}]
Let us prove (A). Let $C$ be a negative curve on $X$
 not isomorphic to $\mathbb{P}^{1}$.  Then
$-aC^{2}\ge  0$, with equality if and only if $a=0$. If $k_C\leq3(g-1)$, there is nothing
to prove. Let us suppose $k=k_C>3(g-1)$ and set $\mathfrak g=g-1$. By
\eqref{eq:Miyaoka 1bis}, one has
\begin{equation}\label{eq:p}
P(k):=2(k-3\mathfrak g)^{2}-a(3k-6\mathfrak g ) \leq 0,
\end{equation}
with strict inequality if $a>0$. 

If $a=0$ and $g=0$, the polynomial $P$ is positive, thus this cannot occur. In the remaining cases,  $k_C$ is less than or equal to the largest root of  $P$, whence we get  \eqref {eq:1}.
 
 Suppose $g>1$ and let $\epsilon>0$ be such that $k_C=(3+\epsilon)(g-1)$. By \eqref {eq:1} we obtain
\begin{equation}\label{eq:big}
\epsilon(g-1)\leq\frac{3}{4}a+\frac{1}{4}\sqrt{9a^{2}+24a(g-1)},
\end{equation}
hence
\[
\epsilon\leq\frac{3a}{4(g-1)}+\frac{1}{4(g-1)}\sqrt{9a^{2}+24a(g-1)}\leq\frac{3}{4}a+\frac{1}{4}\sqrt{9a^{2}+24a}\leq\frac{3}{2}a+1.
\]
which yields \eqref {eq:3}.
On the other hand, \eqref {eq:big} reads
\[
4\epsilon(g-1)-3a\leq\sqrt{9a^{2}+24a(g-1)},
\]
and by squaring one gets \eqref {eq:4}, finishing  the proof of (A).

Next we prove (B). Let $\beta_0>3$. By \eqref {eq:3} and \eqref {eq:4}, negative curves with $\beta>\beta_0$
have bounded genus $g$, therefore by \eqref {eq:1} also $k_C$ is bounded, hence the arithmentic genus $p$ is bounded.

Suppose $K$ is big. By \cite[Cor. 2.2.7]{Lazarsfeld} there exist
$m\in\mathbb{N}^{*}$, an ample divisor $A$ and an effective divisor
$Z$ such that 
\[
mK\equiv A+Z.
\]
Since $Z$ is effective, the set of integers $Z\cdot C$, when $C$ varies among
negative curves, is bounded from below, therefore the degree $A\cdot C=(mK-Z) \cdot C$ of these curves with respect to the ample
divisor $A$ is bounded. Hence, by results of Chow--Grothendieck \cite{Grothendieck},
\cite[Lecture 15]{Mumford}, one has only finitely many components of the Hilbert scheme
containing points corresponding to such curves. 
Since they are negative, these components contain only one curve, proving the assertion. 
\end{proof}

\section{Surfaces not of general type} \label {sec:nagata}

We want to deduce from Theorem \ref {thm:MAIN} a result valid for any smooth surface. 

\begin{thm} \label{prop:nongen}Let $Y$ be any smooth projective surface. Let $\eta\in \Pic(Y)$ be such that
$\vert K_{Y}+\eta\vert$ is big and $\vert 2\eta \vert$ contains a base point free pencil.
Let $\beta_{0}>3$. Then there are finitely many negative curves  $D$ on $Y$ such that  
\begin{equation}\label{eq:nongen}
k_D \geq\beta_{0}(g-1)+\frac{\beta_{0}-2}{2}D\cdot \eta.
\end{equation} \end{thm}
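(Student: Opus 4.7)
The plan is to reduce to Theorem~\ref{thm:MAIN}(B) by passing to a double cover of $Y$. Since $|2\eta|$ contains a base--point--free pencil, Bertini provides a smooth element $B\in |2\eta|$; let $\pi\colon X\to Y$ be the smooth double cover of $Y$ branched along $B$, so that $K_X=\pi^{*}(K_Y+\eta)$. Because $K_Y+\eta$ is big and $\pi$ is finite, $K_X$ is big as well, hence $\kappa(X)=2$ and Theorem~\ref{thm:MAIN}(B) is available on $X$.

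For a negative curve $D\subset Y$ not a component of $B$ (excluding at most finitely many $D$), put $C:=\pi^{*}D$. Standard intersection theory yields $C^{2}=2D^{2}<0$ and $k_C=2(k_D+D\cdot\eta)$. When $C$ is irreducible and $B$ meets $D$ transversally (which we may arrange for a generic choice of $B$ in the pencil), Riemann--Hurwitz on the normalisations gives $g(C)-1=2(g-1)+D\cdot\eta$. Using \eqref{eq:nongen} one then computes
\[
k_C-3(g(C)-1)\;\geq\;2k_D-D\cdot\eta-6(g-1)\;\geq\;(\beta_0-3)\bigl(2(g-1)+D\cdot\eta\bigr)\;=\;(\beta_0-3)(g(C)-1),
\]
so that $k_C\geq(3+\epsilon)(g(C)-1)$ with $\epsilon:=\beta_0-3>0$. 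When $g(C)>1$, Theorem~\ref{thm:MAIN}(B) applied to $X$ yields finitely many such $C$, hence finitely many such $D$. The case where $\pi^{*}D=C_1+C_2$ splits is handled in parallel: each $C_i$ is birational to $D$, has $g(C_i)=g$ and $k_{C_i}=k_D+D\cdot\eta$, and the analogous estimate gives $k_{C_i}\geq\beta_0(g-1)$.

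The principal obstacle is the case $g(C)\leq 1$, which occurs precisely when either $D\cdot\eta=0$ or $D$ is rational with $D\cdot\eta\in\{1,2\}$. If $D\cdot\eta=0$, the morphism $f\colon Y\to\PP^{1}$ induced by the pencil contracts $D$, so $D$ is an irreducible component of a fiber of $f$; only finitely many such components can have negative self--intersection, since only finitely many fibers of $f$ are reducible. If instead $D\cong\PP^{1}$ with $D\cdot\eta\in\{1,2\}$, then $C$ is a rational, respectively elliptic, negative curve on $X$, and Miyaoka's inequality \eqref{eq:Miyaoka 1} (respectively \eqref{eq:Miyaoka 1bis}) applied to $C$ produces a bound on $-C^{2}$, and hence on $-D^{2}$, in terms of $a_X$. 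Together with the bigness of $K_Y+\eta$, applied in the form $m(K_Y+\eta)\equiv A+Z$ with $A$ ample and $Z$ effective (cf.~\cite[Cor.~2.2.7]{Lazarsfeld}), this bounds the $A$--degree of $D$, so that the Chow--Grothendieck argument used at the end of the proof of Theorem~\ref{thm:MAIN}(B) concludes the finiteness.
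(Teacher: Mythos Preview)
Your argument follows the same double--cover strategy as the paper: take a smooth $B\in|2\eta|$, form the branched double cover $\pi\colon X\to Y$, observe that $K_X=\pi^*(K_Y+\eta)$ is big so $\kappa(X)=2$, pull back each negative $D$ to $C=\pi^*D$, and check via Hurwitz that \eqref{eq:nongen} becomes $k_C\ge\beta_0(g(C)-1)$, whence Theorem~\ref{thm:MAIN}(B) applies. The paper's proof is exactly this, stated in four lines.

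Where you go beyond the paper is in treating the boundary situations it passes over: the split case $\pi^*D=C_1+C_2$ (which can only occur when $D\cdot\eta=0$), and the cases $g(C)\le 1$, which lie outside the hypothesis $(3+\epsilon)(g(C)-1)>0$ of Theorem~\ref{thm:MAIN}(B). Your handling of $D\cdot\eta=0$ via the fibration $Y\to\PP^1$ is clean and correct. For the rational/elliptic $C$ you are also right that Miyaoka suffices: taking $\alpha=1$ in \eqref{eq:Miyaoka 1} gives $C^2-k_C+6(g(C)-1)+2a_X\ge 0$, and combined with $k_C+C^2\ge 2g(C)-2$ this yields $-C^2\le 2(g(C)-1)+a_X$, which for $g(C)\le 1$ bounds $-C^2$ (hence $-D^2$ and $k_D$) purely in terms of $a_X$; the bigness decomposition of $K_Y+\eta$ then bounds the $A$--degree as you say. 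One small point worth tightening: your parenthetical ``which we may arrange for a generic choice of $B$'' must produce a \emph{single} $B$ transverse to all the (a priori infinitely many) relevant $D$ at once; over $\CC$ a very general member of the pencil does this, and that is what the paper is tacitly using as well.
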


\begin{proof}
Under the hypotheses there is a smooth curve $B\equiv 2\eta$ intersecting 
all negative curves of $X$ only at smooth points with intersection  multiplicity 1. Let us make a double cover $f:X\to Y$ branched along $B$.  Then for all negative curve $D$ of $X$, $C=f^ *(D)$ is irreducible, negative and  $g(C)=2g(D)-1+\eta\cdot D$ by Hurwitz formula.  Since 
$f_*(K_X)=K_Y\oplus (K_{Y}+\eta)$, then $\kappa(X)=2$ and
we finish by applying  (B) of Theorem \ref {thm:MAIN} to
$X$ and to $C=f^{*}(D)$.
\end{proof}

As an application, we take $Y_n$ to be the plane blow-up at $n$ general points.
Then ${\rm Pic}(Y_n)\cong \mathbb Z^ {n+1}$ 
 generated by the classes of the pull--back $L$ of a line and of minus the exceptional divisors $E_1,\ldots, E_{n}$ over the blown--up points. We write $D=(d,m_{1},\dots,m_{n})$ to denote the
class of a curve with components $d,m_{1},\dots,m_{n}$ with respect to this basis. 
We may use exponential notation to denote repeated  $m_i$'s. 
Thus $-K=(3, 1^ {n})$.

 \begin{prop}\label {prop:nagata}  Fix  $\beta_0>3$. There are finitely many irreducible curves of class $D=(d,m_{1},\dots,m_{n})$  on $Y_n$ such that
\begin{equation}\label{eq:nagata}
\frac {D^ 2}d \leq \frac {2-\beta_0}  {\beta_0} \big (1+ \frac M d\big )\,\,\, \text {where} \,\,\,\,\, M=\sum_{i=1}^ n m_i.
\end{equation}
\end{prop}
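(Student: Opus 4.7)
The plan is to apply Theorem~\ref{prop:nongen} to $Y_n$ with the class $\eta := 4L - \sum_{i=1}^n E_i \in \Pic(Y_n)$. This choice gives $K_{Y_n} + \eta = L$, which is big (as $L^2 = 1$ and $|L|$ is effective); for the other hypothesis, one argues that $|2\eta| = |8L - 2\sum_i E_i|$ is base-point-free of positive dimension on $Y_n$, so that by Bertini---combined with a countable-avoidance argument over the at-most-countable family of negative curves on $Y_n$---one produces a smooth irreducible $B \in |2\eta|$ meeting every negative curve transversally at smooth points, which is what the double-cover construction in the proof of Theorem~\ref{prop:nongen} genuinely requires.

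First I would record the relevant numerics on $Y_n$: $k_D = M - 3d$, $D \cdot \eta = 4d - M$, and $D \cdot (K_{Y_n} + \eta) = D \cdot L = d$, together with the identity $k_D + 4d - M = d$ that drives the reduction. Next I would show the implication $\eqref{eq:nagata} \Rightarrow \eqref{eq:nongen}$ for this $\eta$: since $(2-\beta_0)/\beta_0 < 0$ for $\beta_0 > 2$ and $1 + M/d \geq 1$, \eqref{eq:nagata} yields the weaker bound $D^2/d \leq (2-\beta_0)/\beta_0$, i.e.\ $(2-\beta_0) d \geq \beta_0 D^2$; then inserting $g - 1 \leq p_a - 1 = (D^2 + k_D)/2$ into \eqref{eq:nongen} and simplifying via the identity above produces
\[
k_D - \beta_0(g-1) - \tfrac{\beta_0 - 2}{2}(4d - M) \;\geq\; \tfrac{1}{2}\bigl((2-\beta_0)d - \beta_0 D^2\bigr) \;\geq\; 0,
\]
which is precisely \eqref{eq:nongen}. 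The proposition then follows from the finiteness assertion of Theorem~\ref{prop:nongen}.

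The step I expect to be most delicate is verifying the ``base-point-free pencil'' condition of Theorem~\ref{prop:nongen} for this specific $\eta$: since $\eta^2 = 16 - n$ is not zero for $n \neq 16$, $|2\eta|$ does not literally contain a base-point-free pencil in the strict sense (two members of such a pencil would have zero intersection, forcing $\eta^2 = 0$). One must check that the double-cover proof of Theorem~\ref{prop:nongen} genuinely uses only the existence of a smooth transverse branch divisor $B \in |2\eta|$, which Bertini supplies whenever $|2\eta|$ is base-point-free of positive dimension; and for values of $n$ where the dimension of $|2\eta|$ vanishes (already at $n = 15$ with the present choice), one would replace $\eta$ by $aL - \sum E_i$ for a suitable larger $a = a(n)$, recovering the implication by combining it with a lower bound $M \geq (a - 4)d$ obtained from \eqref{eq:nagata} together with the elementary estimate $\sum m_i^2 \leq M d$.
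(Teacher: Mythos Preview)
Your overall strategy—reduce to Theorem~\ref{prop:nongen} via a suitable $\eta$—is the paper's, but the paper takes the simpler choice $\eta = 4L$ rather than your $\eta = 4L - \sum E_i$. With $\eta = 4L$ one has $K_{Y_n}+\eta = L + \sum E_i$, which is big (since $L$ already is), and $|2\eta| = |8L|$ is very ample for \emph{every} $n$; a very general smooth plane octic missing the blown--up points furnishes the branch curve $B$ at once, so the delicate verification you flag disappears. Moreover $D\cdot\eta = 4d$, and the computation
\[
k_D \ge \beta_0(p_a-1) + \tfrac{\beta_0-2}{2}\cdot 4d
\;\Longleftrightarrow\;
(2-\beta_0)(M+d) \ge \beta_0 D^2
\;\Longleftrightarrow\;
\eqref{eq:nagata}
\]
shows that \eqref{eq:nagata} is \emph{exactly} the $p_a$--version of \eqref{eq:nongen}, with no intermediate weakening to $D^2/d \le (2-\beta_0)/\beta_0$ needed.

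Your choice $\eta = 4L - \sum E_i$ has a genuine gap for large $n$. The system $|8L - 2\sum E_i|$ is empty (or has no smooth irreducible member) once $n$ exceeds $14$ or so, and your proposed rescue via $\eta = aL - \sum E_i$ with $a = a(n)$ requires $M/d \ge a-4$. But combining \eqref{eq:nagata} with $\sum m_i^2 \le Md$ only gives $d(d-M) \le \tfrac{2-\beta_0}{\beta_0}(d+M)$, which forces $M/d > 1$ and nothing more: for $t = M/d$ just above $1$ the inequality is satisfied by taking $d$ large. Since the needed $a$ grows roughly like $\sqrt{n}$, the bound $M/d \ge a-4$ is simply not available, and the argument does not close. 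Dropping the $-\sum E_i$ from $\eta$ removes the problem entirely.
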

\begin{proof} We apply Theorem \ref {prop:nongen}, by taking $\eta=4L$. Indeed
$-K+\eta=(1,-1^ {n})$ is big.  \end{proof} 

Recall that 
\[
\epsilon_n=\inf \{\frac d M, \,\,\, \text {for all effective} \,\,\, D=(d,m_{1},\dots,m_{n}),\,\,\, \text {such that}\,\,\, M >0 \}
\]
is the \emph{Seshadri constant} of $Y_n$. Nagata's Conjecture (see \cite {Nagata}) is equivalent to say that $\epsilon _n=1/\sqrt n$ if $n\ge 10$ (see \cite {HarbourneRoe}). 

\begin{rem} Proposition  \ref {prop:nagata} can be seen as weak form of Nagata's Conjecture. 
Indeeed, let us look at the \emph{homogenous case} $D=(d,m^ n)$ with $n\geq 10$. Nagata's Conjecture predicts that, if the $n$ blown--up points are in very general position, there is no  irreducible such curve  with $D^ 2<0$ (see \cite {CM, Nagata}), i.e., with
$d<\sqrt n m$. The conclusion of Proposition \ref {prop:nagata} is not absence of curves, but finiteness of their set, under a stronger assumption than Nagata's. Let us look at the difference between the two assumptions. In the $(m,d)$--plane \eqref {eq:nagata} applies   to pairs $(m,d)$ in the first quarter below the hyperbola  with equation
\begin{equation} \label{eq:hyper}
\beta_0d^ 2+d(\beta_0-2)-n\beta_0m^ 2+(\beta_0-2)nm= 0 
\end{equation}
drawn in black in Figure \ref {fig:1}. One of its asymptotes (drawn in blue) is parallel to the \emph {Nagata line} $d=\sqrt n m$ (drawn in  green). 

\begin{figure}[h]
        \begin{center}
           \begin{overpic}[scale=0.50]{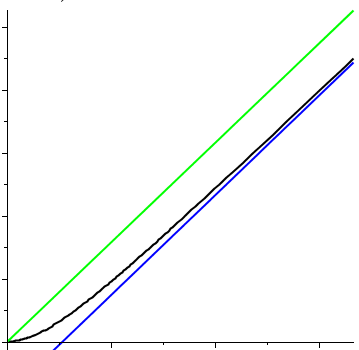}
            
                \end{overpic}
        \end{center}
                 \caption{The hyperbola, its asymptote and the Nagata line}
       \label{fig:1}
    \end{figure}

Since for all effective divisor $D=(d,m_1,\ldots,m_n)$ one has $d/M\ge \epsilon_n$, one has 
approximations $d/M\ge e_n$ to Nagata's conjecture for any lower approximation $e_n$ of $\epsilon_n$. 
The best known in general is the one in \cite  {HarbourneRoe}
\begin{equation}\label{eq:hr}
\epsilon _n \geq e_n=\sqrt {\frac 1n \big (1 - \frac 1 {f(n)}\big )}
\end{equation}
where $f(n)$ is, for most $n$, an explicitly given quadratic function of $n$ (see \cite [Corollary 1.2.3] {HarbourneRoe}). For $n=10$ in the homogeneous case the best result is $e_{10}=228/721$ (see \cite{Petra}).

The hyperbola \eqref{eq:hyper} meets the line $d=e_n m$, therefore Proposition \ref{prop:nagata} gives some information in an unlimited region where the above approximations to Nagata do not work.
\end{rem}

\begin{rem} Proposition \ref{prop:nagata} implies that there are finitely many irreducible curves of class $D=(d,m_{1},\dots,m_{n})$  on $Y_n$ such that
\begin{equation}\label{eq:sesh}
\frac {D^ 2}d \leq \frac {2-\beta_0}  {\beta_0} \big (1+ \frac 1 \epsilon_n \big),
\end{equation}
where $\epsilon_n$ can be replaced by $e_n$ in Harbourne--Ro\'e's approximation \eqref {eq:hr}. 
This result is not surprising. Indeed, J. Ro\'e pointed out to us an easy argument which shows that  there is no irreducible curve of class $D=(d,m_{1},\dots,m_{n})$  on $Y_n$ such that 
\[
\frac {D^ 2} d< - \frac 1 {n\epsilon_n}
\]
which is better than \eqref {eq:sesh}, and the difference
\[
\frac {\beta_0-2}  {\beta_0} \big (1+ \frac 1 \epsilon_n \big)- \frac 1 {n\epsilon_n}
\]
 tends to $\frac {\beta_0-2}  {\beta_0}\sim \frac 1 3$ for $n\to \infty$. \end{rem}

\section{A boundedness result for non--negative curves}\label{sec:pos}

With the usual notation, for a curve $C$ on the surface $X$ with $C^ 2\neq 0$, we set  $x_C:=\frac{\delta_C}{C^{2}}$, with the usual convention that the index $C$ can be dropped if there is no ambiguity. 

\begin{thm}\label{thm:MAIN2}
Consider real numbers  $x_{0}>\frac{1}{2}$ and $\beta_{0}>3$. Let $C$ be a curve
on $X$, with $\kappa(X)\geq 0$,  satisfying the following conditions:\\
(1)  $C^{2}>0$, $k_C=\beta(g-1)$ with $\beta>\beta_{0}$ and $g>1$;\\
(2) $x_C>x_{0}$.\\
Then 
\begin{equation}
g\leq a\frac{(\beta-2)}{(\beta-3)^{2}}\frac{3x_{0}-1}{2x_{0}-1}+1\label{eq:positif 1},
\end{equation} 
\begin{equation}
k_C\leq a\frac{(\beta-2)}{\beta(\beta-3)^{2}}\frac{3x_{0}-1}{2x_{0}-1},\label{eq:positif 2}
\end{equation}
\begin{equation}
k_C\leq 2(g-1)  +a\frac {(\beta-2)^ 2}{(\beta-3)^ 2}\cdot \frac{3x_{0}-1}{2x_{0}-1}.\label{eq:positif 3}
\end{equation}
If $\kappa(X)=2$, then the Hilbert scheme of curves  on $X$ satisfying
 (1) and (2) has finitely many irreducible components.\end{thm}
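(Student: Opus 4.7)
The plan is to apply Miyaoka's inequality \eqref{eq:Miyaoka 1bis} — which is available since $\beta>3$ forces $k_C>3(g-1)$ and $g>1$ forces $C\not\simeq\mathbb{P}^{1}$ — after first converting the hypothesis $x_C>x_{0}>\tfrac{1}{2}$ into an \emph{upper} bound on $C^{2}$. Setting $\mathfrak{g}=g-1$, the arithmetic genus identity $C^{2}+k_C=2p_{a}-2$ together with $\delta=p_{a}-g$ and $k_C=\beta\mathfrak{g}$ gives $C^{2}=2\delta+(2-\beta)\mathfrak{g}$. Substituting this into $\delta>x_{0}C^{2}$ and using $2x_{0}>1$ to flip the direction of the inequality, I obtain $C^{2}\leq\frac{(\beta-2)\mathfrak{g}}{2x_{0}-1}$, which is the crucial upper bound.

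Next I would plug this into Miyaoka's inequality, which with $k_C=\beta\mathfrak{g}$ reads $2(\beta-3)^{2}\mathfrak{g}^{2}\leq a\bigl(C^{2}+3(\beta-2)\mathfrak{g}\bigr)$. Using the identity $\frac{1}{2x_{0}-1}+3=2\cdot\frac{3x_{0}-1}{2x_{0}-1}$ and dividing by $2\mathfrak{g}>0$ immediately yields \eqref{eq:positif 1}. Multiplying through by $\beta$ gives a bound on $k_C=\beta\mathfrak{g}$ of the type \eqref{eq:positif 2}, while writing $k_C=2\mathfrak{g}+(\beta-2)\mathfrak{g}$ and applying the bound for $\mathfrak{g}$ only to the second summand gives the sharper form \eqref{eq:positif 3}.

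For the last assertion when $\kappa(X)=2$, I would follow the scheme of the proof of Theorem \ref{thm:MAIN}(B). The key observation is a monotonicity one: both $\frac{\beta-2}{(\beta-3)^{2}}$ and $\frac{(\beta-2)^{2}}{(\beta-3)^{2}}$ are decreasing on $(3,\infty)$, so for $\beta\geq\beta_{0}$ the bounds \eqref{eq:positif 1} and \eqref{eq:positif 3} hold with $\beta$ replaced by $\beta_{0}$, giving bounds on $g$ and $k_C$ that are uniform in $\beta$. The upper bound $\delta<\frac{x_{0}(k_C-2\mathfrak{g})}{2x_{0}-1}$ extracted above then bounds $\delta$, hence $p_{a}=g+\delta$. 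Using $K$ big (since $\kappa=2$) and writing $mK\equiv A+Z$ with $A$ ample and $Z$ effective as in \cite[Cor.~2.2.7]{Lazarsfeld}, one has $Z\cdot C\geq0$ for all $C$ not supported on $Z$ (only finitely many exceptions), so $A\cdot C\leq m\,k_C$ is uniformly bounded. Bounded degree against the ample $A$ together with bounded $p_{a}$ leaves only finitely many Hilbert polynomials, and hence finitely many irreducible components of the Hilbert scheme.

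The step I expect to be the most delicate is the uniformity needed at the end: the explicit bounds \eqref{eq:positif 1}--\eqref{eq:positif 3} are functions of $\beta$, and the finiteness conclusion requires that they remain bounded as $\beta$ varies over $[\beta_{0},+\infty)$. This reduces to the routine calculus check of the monotonicity above, but it is the conceptual place where the hypothesis $\beta\geq\beta_{0}$ gets used a second time, after \eqref{eq:positif 1}--\eqref{eq:positif 3} have already been established. Everything else is formal manipulation of Miyaoka's inequality and the arithmetic genus identity, together with the familiar bigness trick from Theorem \ref{thm:MAIN}(B).
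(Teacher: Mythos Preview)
Your argument is correct and essentially the same as the paper's: both routes amount to the inequality $(\beta-3)^{2}\mathfrak{g}^{2}\le a(3\delta-C^{2})$ combined with the bound on $C^{2}$ (equivalently, on $(3\delta-C^{2})/(2\delta-C^{2})$) coming from $x_{C}>x_{0}$. The only cosmetic difference is that the paper starts from \eqref{eq:Miyaoka 1} and optimizes over $\alpha\in[0,1]$ to recover \eqref{eq:Miyaoka 1bis} in this setting, whereas you invoke \eqref{eq:Miyaoka 1bis} directly and bound $C^{2}$ first; your explicit monotonicity check for the uniformity in $\beta$ at the end is a detail the paper leaves implicit.
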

\begin{proof} One has
\[
k_C-2(g-1)=2\delta-C^ 2=(\beta-2)(g-1)>0.
\]
Hence
by \eqref{eq:Miyaoka 1}, we have
\begin{equation}
P(\alpha):=\alpha^{2}(3\delta-C^{2})+\alpha\frac{2(\beta-3)}{\beta-2}(C^{2}-2\delta)+a\geq0\,\label{eq:Positive}
\end{equation}
 for $\alpha \in [0,1]$. Since the
 coefficient of the leading term of  $P$ is positive, the minimum of $P(\alpha)$
 is attained for 
\[
\alpha_{0}=\frac{(\beta-3)(2\delta-C^{2})}{(\beta-2)(3\delta-C^{2})}.
\]
Since $\beta>3$, we have $\alpha_{0}\in ]0,1[$, and, by \eqref {eq:Positive} we have
\[
P(\alpha_0)=-\frac{(\beta-3)^{2}(2\delta-C^{2})^{2}}{(\beta-2)^{2}(3\delta-C^{2})}+a\geq0
\]
Thus
\[
\frac{a}{\mu}\geq\frac{(2\delta-C^{2})^{2}}{(3\delta-C^{2})}\,\,\, \text {where}\,\,\, \mu=\frac{(\beta-3)^{2}}{(\beta-2)^{2}},
\]
hence
\[
\frac{a}{\mu}\cdot \frac{3\delta-C^{2}}{2\delta-C^{2}}\geq2\delta-C^{2}.
\]
We have 
\[
\frac{3\delta-C^{2}}{2\delta-C^{2}}=\frac{3x-1}{2x-1}<\frac{3x_{0}-1}{2x_{0}-1}
\]
because $\frac{3x-1}{2x-1}$ is decreasing for $x>x_{0}>\frac{1}{2}$, hence
\[
(\beta-2)(g-1)=k_C-2(g-1)=2\delta-C^{2}\leq\frac{a}{\mu}\cdot \frac{3x_{0}-1}{2x_{0}-1},
\]
which implies \eqref {eq:positif 1}, \eqref {eq:positif 2} and \eqref {eq:positif 3}. 
Moreover both $g$ and  $k_C$ are bounded from above and, if $\kappa(X)=2$, we conclude with the same argument at the end of the proof of Theorem \ref {thm:MAIN}.
\end{proof}

\begin{cor}
Let be $\beta_{0}>3$ and let $(C_{n})_{n\in \mathbb  N}$ be a sequence of curves 
on
$X$ with $\kappa=2$ such that $k_{C_{n}}>\beta_{0}(g(C_{n})-1)$, $C_{n}^{2}>0$ and $\lim g(C_{n})=\infty$.
Then 
\begin{equation}\label{eq:limsup}
\lim_n \frac {\delta_{C_n} }  {C_{n}^{2}} = \frac{1}{2},
\end{equation}
moreover  $\lim_n \frac {g(C_n)} {\delta_{C_{n}}}=\lim_n \frac{K\cdot C_n}{\delta_n}=0.$
\end{cor}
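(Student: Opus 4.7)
The plan is to extract all three limits from the arithmetic-genus identity $2\delta - C^2 = k - 2(g-1)$, which follows from $C^2 + k = 2p_a - 2 = 2(g-1) + 2\delta$, combined with the inequalities packaged inside Theorem \ref{thm:MAIN2}. Writing $x_n := \delta_{C_n}/C_n^2$, $g_n := g(C_n)$, $k_n := k_{C_n}$, $\delta_n := \delta_{C_n}$, and $\beta_n := k_n/(g_n-1) > \beta_0$, the identity together with $k_n > \beta_0(g_n-1)$ gives $2\delta_n - C_n^2 > (\beta_0 - 2)(g_n - 1) > 0$, so $x_n > \tfrac{1}{2}$ for every $n$.

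For the first limit I would argue by contradiction: suppose $x_{n_j} > x_0$ along a subsequence, for some fixed $x_0 > \tfrac{1}{2}$. Then Theorem \ref{thm:MAIN2} applies and \eqref{eq:positif 1} yields
\[
g_{n_j} \leq a \cdot \frac{\beta_{n_j} - 2}{(\beta_{n_j} - 3)^2} \cdot \frac{3x_0 - 1}{2x_0 - 1} + 1.
\]
Since the function $\beta \mapsto (\beta-2)/(\beta-3)^2$ is bounded on $[\beta_0, \infty)$ (it is continuous, positive, and tends to $0$ at infinity), this bounds $g_{n_j}$, contradicting $g_n \to \infty$. Hence $x_n \leq x_0$ eventually for every $x_0 > \tfrac{1}{2}$, which together with $x_n > \tfrac{1}{2}$ gives $\lim_n x_n = \tfrac{1}{2}$.

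For the remaining two limits the crucial step, and the main obstacle of the plan, is a quadratic lower bound $\delta_n \gtrsim (g_n-1)^2$. Taking $x_0 \nearrow x = x_n$ in \eqref{eq:positif 1} and using $\frac{3x-1}{2x-1} = \frac{3\delta - C^2}{2\delta - C^2}$ together with $2\delta_n - C_n^2 = (\beta_n - 2)(g_n-1)$, the inequality rearranges to
\[
(g_n - 1)^2 (\beta_n - 3)^2 \leq a(3\delta_n - C_n^2) \leq 3 a \delta_n,
\]
so $\delta_n \geq \frac{(\beta_0 - 3)^2}{3a}(g_n - 1)^2$, which gives $g_n/\delta_n \to 0$. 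Finally, for $k_n/\delta_n$ I would use
\[
\frac{k_n}{\delta_n} = \frac{2(g_n-1) + 2\delta_n - C_n^2}{\delta_n} = 2 + \frac{2(g_n - 1)}{\delta_n} - \frac{1}{x_n},
\]
whose right-hand side converges to $2 + 0 - 2 = 0$ by the two limits just established.
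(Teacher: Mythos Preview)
Your proof is correct and follows the same approach as the paper. The paper's argument for $\lim x_n=\tfrac12$ is identical in substance (lower bound from the genus identity, upper bound from Theorem~\ref{thm:MAIN2}); for the remaining limits the paper merely says they are ``readily computed'', and your derivation of $(g_n-1)^2(\beta_n-3)^2\le a(3\delta_n-C_n^2)$ by letting $x_0\nearrow x_n$ in \eqref{eq:positif 1}, together with the algebraic identity for $k_n/\delta_n$, is exactly the kind of computation intended.
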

\begin{proof}
Let $C$ be a curve with $k_C=(3+\epsilon)(g-1)$, $\epsilon >0$. Since $(1+\epsilon)(g-1)=2\delta-C^2$, we get $\frac{\delta}{C^2}-\frac{1}{2}=(1+\epsilon)\frac{g-1}{2C^2}\geq 0$. Therefore $\lim\inf_n \frac{\delta_{n}}{C_{n}^{2}} \geq \frac{1}{2}$. On the other hand,  by Theorem \ref{thm:MAIN2} we obtain $\lim\sup_n \frac{\delta_{n}}{C_{n}^{2}} \leq \frac{1}{2}$. The remaining limits are readily computed.
\end{proof}

\begin{example}
For Shimura curves on Shimura surfaces, we have $K\cdot C=4(g-1)$ and, if there is one, 
there are infinitely many of them, with the genus going to infinity. 
\end{example}

\section{On a conjecture by Vojta}\label{sec:voita}

The results in \S \ref {sec:pos} are reminiscent of the following conjecture (see \cite {Autissier}), which predicts that curves of bounded geometric
genus on a surface of general type form a bounded family:
\begin{conjecture}\label{conj:1}
Let $X$ be a smooth projective surface. There exist
constants $A,\, B$ such that for any curve $C$ we have
\[
k_C\leq A(g-1)+B.
\]

\end{conjecture}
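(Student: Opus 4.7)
The statement is the well--known conjecture of Vojta, which is open in full generality, so my ``proof proposal'' is really a reduction strategy based on the Miyaoka--type inequalities developed in Sections \ref{sec:proof}--\ref{sec:pos}, isolating the one case where those methods intrinsically fail.

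The plan is to split into cases according to the sign of $C^2$ and the ratio $k_C/(g-1)$. If $k_C\le 3(g-1)$ the conclusion holds trivially with $A=3$, $B=0$, so I may assume $k_C>3(g-1)$. If $C^2<0$, Theorem \ref{thm:MAIN}(B) already gives finiteness of such curves on a surface of general type, hence a single constant $B$ absorbs the finitely many exceptional values of $k_C$. If $C^2=0$, Miyaoka's inequality \eqref{eq:Miyaoka 1bis} reduces to $2(k_C-3(g-1))^2\le 3a(k_C-2(g-1))$, and solving this elementary quadratic in $k_C$ directly yields the desired bound. If $C^2>0$, the hypothesis $k_C>3(g-1)$ combined with the adjunction formula $k_C=2(g-1)+2\delta-C^2$ forces $x_C=\delta/C^2>1/2$ automatically; provided $x_C$ stays bounded away from $1/2$, inequality \eqref{eq:positif 3} of Theorem \ref{thm:MAIN2} yields even the stronger bound $k_C\le 2(g-1)+\text{const}$.

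The main obstacle is the residual range $C^2>0$, $k_C>3(g-1)$, with $x_C\to 1/2^+$. There the coefficient $\frac{3x_0-1}{2x_0-1}$ appearing in \eqref{eq:positif 1}--\eqref{eq:positif 3} diverges and Theorem \ref{thm:MAIN2} ceases to give a bound. Geometrically such curves have singularity defect $\delta$ and self--intersection $C^2$ both much larger than $g$, while $2\delta-C^2$ remains only moderately larger than $g-1$: highly singular curves of very large self--intersection whose geometric genus is dragged down by the singularities. The Chern--number inequalities used throughout this paper are intrinsically too weak to control how many singularities a curve of prescribed geometric genus may acquire on a fixed surface of general type. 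My plan therefore reduces Vojta's conjecture to the task of bounding $2\delta-C^2$ linearly in $g-1$ for curves of positive self--intersection on $X$ --- equivalently, to bounding the total $\delta$--invariant of a curve in terms of its geometric genus. Producing such a bound appears to require a genuinely new ingredient (for example log--canonical threshold estimates, or gonality--type inequalities) lying outside the Miyaoka--Bogomolov framework that drives the rest of the paper, and this is precisely where the conjecture loses contact with the methods employed here.
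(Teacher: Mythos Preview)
The statement is Conjecture \ref{conj:1}, which the paper presents as an \emph{open problem} and does not attempt to prove; there is no ``paper's own proof'' to compare against. Section \ref{sec:voita} only records the special cases where the bound is known (big cotangent bundle, $K^2>c_2$, ball and bi--disk quotients) and studies the asymptotic invariant $\Lambda_X$. You recognise this correctly at the outset.

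Your reduction is sound and in fact pushes further than the paper does. The trichotomy on the sign of $C^2$ is the natural one; the $C^2=0$ case is indeed disposed of by \eqref{eq:Miyaoka 1bis} exactly as you say, and your observation that $k_C>3(g-1)$ together with adjunction forces $x_C>\tfrac12$ when $C^2>0$ and $g\ge 1$ is correct. Your identification of the residual range $C^2>0$, $x_C\to\tfrac12^+$ as the genuine obstruction matches precisely the Corollary following Theorem \ref{thm:MAIN2}, and your closing diagnosis --- that bounding $2\delta-C^2$ linearly in $g-1$ requires input beyond the Miyaoka--Bogomolov framework --- is an accurate statement of where these methods stop.

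One small caveat: in the negative case you invoke Theorem \ref{thm:MAIN}(B), but that result requires $(3+\epsilon)(g-1)>0$, i.e.\ $g\ge 2$. Negative curves of genus $0$ or $1$ with $k_C>3(g-1)$ are not covered, and their finiteness on a general type surface is itself part of the conjectural picture (it would follow, for instance, from Lang's conjecture). This is a well--known lacuna rather than an oversight in your strategy, but it should be flagged.
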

If this conjecture is satisfied for  $X$ with $\kappa(X)=2$, then $X$
 contains finitely many curves of genus $0$ or
$1$. This is known to hold for minimal surfaces 
with big cotangent bundle (see \cite{Bogomolov, Deschamps}).

A stronger version of Conjecture \ref {conj:1} is the following conjecture by Vojta (see again \cite {Autissier}):
\begin{conjecture}\label{conj:2}
For any real number $\epsilon>0$, we can take $A=4+\epsilon$ in Conjecture \ref {conj:1}
(and $B=B(\epsilon)$  a function of $\epsilon$). 
\end{conjecture}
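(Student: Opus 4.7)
The plan is to show how the paper's tools already reduce Vojta's Conjecture~\ref{conj:2} to finitely many residual regimes, the truly hard one being a major open conjecture in its own right.

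First I would reduce to the case $\kappa(X)=2$: surfaces of non-general type can be dispatched by classification (for $\kappa=-\infty$ the bound is essentially trivial on appropriate families; for $\kappa=0,1$ the canonical bundle is explicitly controlled). For $X$ of general type I would then split the analysis by genus and by $C^{2}$.

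In the main range $g\geq 2$, there are four sub-cases. (a) If $C^{2}<0$, Theorem~\ref{thm:MAIN}(B) gives $k_C\leq(3+\epsilon)(g-1)+B_{1}(\epsilon)$, with finitely many exceptional curves absorbed into the final additive constant. (b) If $C^{2}=0$, Miyaoka's inequality~\eqref{eq:Miyaoka 1bis} specialises to exactly the quadratic analysed in Theorem~\ref{thm:MAIN}(A), giving $k_C\leq 3(g-1)+O\bigl(\sqrt{a_X(g-1)}\bigr)$. (c) If $C^{2}>0$ with $x_C\leq 1/2$, the identity $k_C-2(g-1)=2\delta-C^{2}$ immediately forces $k_C\leq 2(g-1)$ with no further work. (d) If $C^{2}>0$ with $x_C>1/2$, Theorem~\ref{thm:MAIN2}, in particular~\eqref{eq:positif 3}, gives $k_C\leq 2(g-1)+O_{\epsilon}(a_X)$. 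Thus in every sub-case for $g\geq 2$ the bound $k_C\leq(4+\epsilon)(g-1)+B(\epsilon)$ holds, in fact with leading constant $3+\epsilon$.

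The residual regime is $g\leq 1$, where $(4+\epsilon)(g-1)\leq 0$, so the conjecture demands an \emph{absolute} upper bound on $k_C$ as $C$ varies among rational and elliptic curves on $X$. This is the main obstacle: it is essentially the (conjectural) Bombieri--Lang finiteness of curves of geometric genus at most $1$ on a surface of general type, known only in restricted settings such as surfaces with big cotangent bundle (\cite{Bogomolov,Deschamps}). Without new input here, the best attainable conclusion with the paper's methods is a ``generic'' version of Conjecture~\ref{conj:2}: there exists a proper closed subset $Z\subsetneq X$ and constants $A=4+\epsilon$, $B=B(\epsilon)$ such that every irreducible curve $C\not\subset Z$ satisfies $k_C\leq A(g-1)+B$; assuming Bombieri--Lang, the full conjecture then follows from the $g\geq 2$ analysis above.
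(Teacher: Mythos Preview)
The statement is a \emph{conjecture} in the paper, presented as open: the paper gives no proof, only surrounding discussion (the remark linking it to BNC for smooth curves, Miyaoka's result when $K^{2}>c_{2}$, the bi-disk case of \cite{Autissier}, and the lower bound on $\Lambda_X$ in Theorem~\ref{thm:voita}). There is therefore no proof in the paper to compare your proposal against.

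You are right that the residual case $g\leq 1$ is the essential obstacle, amounting to a uniform bound on $k_C$ over all rational and elliptic curves on $X$---a well-known open problem. But your claim that the $g\geq 2$ range is already settled by the paper's results is too optimistic: case~(d) does not go through. Theorem~\ref{thm:MAIN2} requires a \emph{fixed} threshold $x_0>\tfrac12$, and the additive constant in \eqref{eq:positif 3} is
\[
a\,\frac{(\beta-2)^2}{(\beta-3)^2}\cdot\frac{3x_0-1}{2x_0-1},
\]
which diverges as $x_0\downarrow\tfrac12$. Your dichotomy ``$x_C\le\tfrac12$'' versus ``$x_C>\tfrac12$'' leaves no fixed $x_0$ available for curves with $x_C$ barely above $\tfrac12$, so the asserted uniform bound $k_C\le 2(g-1)+O_\epsilon(a_X)$ does not follow. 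Indeed, the corollary immediately after Theorem~\ref{thm:MAIN2} shows that any putative counterexample sequence with $C_n^2>0$, $\beta_{C_n}>\beta_0>3$ and $g_n\to\infty$ must satisfy $x_{C_n}\to\tfrac12$, which is precisely the regime where \eqref{eq:positif 3} degenerates. Thus even restricted to $g\geq 2$ and $C^{2}>0$, Conjecture~\ref{conj:2} is not a consequence of the results proved in the paper, and your reduction does not close as stated.
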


An even stronger, more recent version, predicts that $A=2+\epsilon$ (see \cite {Autissier}). 

\begin{rem}
If $C$ is a smooth curve on $X$, then $k_C=2(g-1)-C^2$, therefore if BNC holds, then Vojta's conjecture holds for smooth curves with $A=2$. This suggests a close relationship between Vojta's conjecture and BNC.
\end{rem}

Miyaoka proves in \cite {Miyaoka} that Conjecture \ref {conj:1} also holds if  $K^{2}>c_{2}$ and he gives explicit values for $A$ and
$B$,  but they are far away from the ones predicted by Conjecture \ref {conj:2}. 
Moreover Miyaoka proves that $k_C\leq 3(g-1)$ 
for (smooth) compact ball quotient surfaces on which the equality is attained by an
infinite number of curves, i.e.,  Shimura curves, if they
exists.

In \cite{Autissier} one proves 
that for surfaces whose universal cover is the bi--disk, one has 
\[
k_C\leq4(g-1).
\]
This is sharp since for Shimura curves on Shimura
surfaces, one has $k_C=4(g-1)$.

For $X$ a surface, we define 
\[
\Lambda_{X}=\sup_{(C_{n})_{n\in\mathbb{N}}} \big \{ \limsup_{n}\frac{K\cdot C_{n}}{g_{n}-1}\big \}
\]
where $(C_{n})_{n\in\mathbb{N}}$ varies among all sequences of curves
$C_{n}$ in $X$ of genus $g_{n}=g(C_{n})>1$ with $\lim_{n}g_{n}=\infty$.
Conjecture \ref {conj:1} says that $\Lambda_X<\infty$. 

If $X$ has trivial canonical bundle, then $\Lambda_X=0$. Apart form this  case, 
and the aforementioned cases  studied in  \cite{Autissier, Miyaoka},  nothing is known about $\Lambda_X$.
The following result gives us a  piece of information:

\begin{thm}\label{thm:voita} Let $X$ be a surface of general type. Let $L$ be a very ample divisor on $X$, and let $\gamma$ be the arithmetic genus of curves in $\vert L\vert$. Then
\[
\Lambda_X\geq \frac {K\cdot L}{L^ 2+\gamma-1}>0.
\]
\end{thm}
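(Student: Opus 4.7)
The plan is to construct an explicit sequence $(C_n)_{n\geq 3}$ of irreducible curves on $X$ for which the ratio $k_{C_n}/(g(C_n)-1)$ is exactly $K\cdot L/(L^2+\gamma-1)$ for every $n$, with $g(C_n)\to\infty$. Plugging this sequence into the definition of $\Lambda_X$ immediately yields the stated inequality.

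For each $n\geq 3$, since $L$ is very ample, by Bertini I choose $n$ general smooth curves $D_1,\ldots,D_n\in|L|$, pairwise transverse and with no three concurrent, so that $C_0^{(n)}:=D_1+\cdots+D_n\in|nL|$ is a reducible nodal curve with exactly $\binom{n}{2}L^2$ nodes. I then deform $C_0^{(n)}$ inside $|nL|$ to an irreducible curve $C_n$ obtained by smoothing precisely a set $S_n$ of $nL^2$ of these nodes and preserving the remaining $\frac{n(n-3)}{2}L^2$. For $S_n$ I take, for instance, all $L^2$ nodes lying on each of the $n$ pairs $(D_1,D_2),(D_2,D_3),\ldots,(D_{n-1},D_n),(D_n,D_1)$; viewed in the dual graph of $C_0^{(n)}$, which is $K_n$ with $L^2$ parallel edges between each pair of vertices, these form a Hamiltonian cycle, hence a connected spanning subgraph, and this guarantees that the deformation $C_n$ is irreducible. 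Since $p_a(C_n)=1+n(\gamma-1)+\binom{n}{2}L^2$ is preserved under flat deformation, one gets $g(C_n)=p_a(C_n)-\delta_n=1+n(L^2+\gamma-1)$.

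A direct computation then gives $K\cdot C_n/(g(C_n)-1)=nK\cdot L/[n(L^2+\gamma-1)]=K\cdot L/(L^2+\gamma-1)$ for all $n$, and $g(C_n)\to\infty$: indeed $L^2\geq 1$ (as $L$ is very ample), while $X$ of general type with $L$ very ample forces $K\cdot L>0$, so $\gamma-1=(L^2+K\cdot L)/2\geq 1$ and $L^2+\gamma-1\geq 2$. Hence $\Lambda_X\geq K\cdot L/(L^2+\gamma-1)$, and the bound is strictly positive because $K\cdot L>0$.

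The only genuine obstacle is the partial-smoothing step: one must show that inside $|nL|$ there is an irreducible deformation of $C_0^{(n)}$ smoothing exactly the prescribed $S_n$ and preserving all the other nodes. This is a classical problem on smooth surfaces; the obstruction lies in $H^1$ of an appropriate twist of the ideal sheaf of the nodes to be preserved, and it vanishes for $n$ large by Serre vanishing since $L$ is very ample. The existence of the required partial smoothings then follows from standard non--emptiness results for Severi varieties of nodal curves on surfaces (Tannenbaum, Chiantini--Sernesi, Tyomkin, etc.), and this is where the main technical content of the proof resides.
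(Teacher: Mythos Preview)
Your construction has a genuine gap at exactly the point you flag: the partial--smoothing step is not justified on an arbitrary surface of general type, and the justification you offer does not work. You invoke Serre vanishing for $H^1$ of the ideal sheaf of the nodes to be preserved, but that ideal sheaf depends on $n$: you are imposing $\frac{n(n-3)}{2}L^2$ point conditions, a number growing quadratically in $n$, so there is no fixed coherent sheaf being twisted and Serre vanishing simply does not apply. The Severi--variety references you cite (Tannenbaum, Chiantini--Sernesi, Tyomkin) concern $\mathbb P^2$, or surfaces with specific positivity or toric structure; none of them gives, for an arbitrary surface of general type, the existence of an irreducible curve in $|nL|$ with a prescribed number of nodes obtained by smoothing a chosen subset of the nodes of $D_1+\cdots+D_n$. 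Non--emptiness and irreducibility of Severi varieties on surfaces of general type are genuinely hard open problems, so this cannot be dismissed as routine. Even granting a first--order smoothing along your Hamiltonian cycle, you would still owe unobstructedness and the passage from infinitesimal connectedness of the dual graph to actual irreducibility of the nearby curve.

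The paper sidesteps all of this with a much more robust construction. Embed $X$ in $\mathbb P^r$ via $|L|$ and take a general linear projection $\pi:X\to\mathbb P^2$, of degree $L^2$, with ramification divisor $R\equiv K+3L$. For each $n$ let $C_n=\pi^*(\Gamma_n)$ where $\Gamma_n\subset\mathbb P^2$ is a general rational curve of degree $n$; then $C_n\in|nL|$ is irreducible (monodromy of a general projection), and Hurwitz applied to $\tilde C_n\to\tilde\Gamma_n\cong\mathbb P^1$ gives
\[
2g_n-2=nL\cdot K+(3n-2)L^2,\qquad\text{hence}\qquad \frac{K\cdot C_n}{g_n-1}=\frac{K\cdot L}{L^2+\gamma-1-\frac{L^2}{n}}\ \longrightarrow\ \frac{K\cdot L}{L^2+\gamma-1}.
\]
Thus the same limit is reached without any deformation theory: the existence of $\Gamma_n$ is trivial, and the genus comes from Hurwitz rather than from counting nodes. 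Your numerics are correct and your sequence, if it existed, would even hit the bound exactly for every $n$ rather than only in the limit; but the existence of that sequence is precisely what is not established.
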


\begin{proof}  Look at the surface $X$ embedded in $\mathbb P^ r$, with $r\geq 3$, via $L$. 
Then take a general projection $\pi: X\to \mathbb P^ 2$. Consider a general rational curve  of degree $n$ in $\mathbb P^ 2$ and let $C_n$ be its pull--back via $\pi$. Then $C_n\in \vert nL\vert$. 

By Hurwitz formula, the ramification divisor $R$ of $\pi$ is such that $R\equiv K+3L$. So Hurwitz formula again, implies that the geometric  genus $g_n$ of $C_n$ 
satisfies
\[
2g_n-2=nL\cdot K+(3n-2)L^ 2.
\]
Therefore
\[
\frac {K\cdot C_n}{g_n-1}= \frac {L\cdot K}{L^ 2+\gamma-1- \frac {L^ 2}n}
\]
and this proves the assertion.\end{proof}


\begin{example} Suppose that, in the setting of  Theorem \ref {thm:voita}, one has $K=mL$, with $m>0$. Then 
\[
\Lambda_X\geq	 \frac {2m}{m+3}.
\]
So there are sequences $(X_n)_{n\in \mathbb N}$ of surfaces, e.g., complete intersections of increasing degree in projective space, with $m\to \infty$, and therefore $\Lambda_{X_n}\to 2$ (from below). 
\end{example}



\end{document}